\newtheorem{thm}{Theorem}[subsection]
\newtheorem{prop}[thm]{Proposition}
\newtheorem{lem}[thm]{Lemma}
\theoremstyle{remark}
\newtheorem{rem}[thm]{Remark}
\newtheorem{exm}[thm]{Example}
\title{Counting occurrences of a pattern in a binary word}
\author{Roger Tian\footnote{\href{mailto:rgtian@ucdavis.edu}{rgtian@ucdavis.edu} or \href{mailto:htrland@gmail.com}{htrland@gmail.com}}}
\date{\today}
\begin{document}
\maketitle

\begin{abstract}
Enumerating the number of times one word occurs in another is a much-studied combinatorial subject. By utilizing a method that we call ``lexicographic extreme referencing'', we provide a formula for computing occurrences of one binary word in another. We then study $B_{n,p}(k)$, the number of binary words of length $n$ containing a given word $p$ exactly $k$ times. For this purpose, we first use lexicographic extreme referencing to provide an algorithm for constructing all words $w$ that contain a given word $p$. Afterward, we give a modified version of this algorithm for constructing the subset of binary words that are ``primitive'' with respect to $p$, and we discuss approaches for finding $B_{n,p}(k)$ via primitive words.
\end{abstract}

\section{Introduction}
Enumeration of patterns in words is a major subject of research in combinatorics. One can read about the rich body of work already done on pattern-avoidance in permutations alone \cite{Kit11} \cite{Vat15}. The notion of patterns in words depends on the problem under consideration. In this paper, we take patterns to be subwords of a given word that match another fixed word exactly; a subword is formed by deleting any set of letters in the original word.

One major direction of research in this subject is to study how many times a pattern $p$ occurs as a subword of another word $w$. Statistical results such as mean, variance, and a Gaussian limit law have been obtained for the number of occurrences of a pattern in a random word \cite{Flaj}. In connection with algorithmics and data mining, researchers have also given algorithms to find patterns that occur numerous times \cite{IITN05}. In \cite{Fang}, a concept of subword entropy was formulated to analyze the maximal number of occurrences among all subwords. As of the time of writing, we are unaware of any formula in the literature for computing the exact number of occurrences of a given pattern in a given word.

In the opposite direction, work has also been done on counting words that contain a given pattern a fixed number of times \cite{BurMan} \cite{HeuMan} \cite{Menon}. We focus on the case of binary words (where the letters are 0 or 1) in this paper. Subwords in binary words have proven useful in studying permutation patterns \cite{GilTom} \cite{HongLi} \cite{Meno2} \cite{Meno1}. In \cite{Menon}, the authors studied the number $B_{n,p}(k)$ of binary words having length $n$ that contain a given pattern $p$ exactly $k$ times. They found formulas for $B_{n,p}(k)$ for a few small values of $k$. However, finding $B_{n,p}(k)$ becomes much more difficult for larger $k$, due to the many more ways occurrences can sum up to $k$, hence it seems that a different approach is needed to tackle the general case.

This paper studies the aforementioned problems for binary words and is organized as follows. In Section \ref{calc occur}, we introduce a method that we dub ``lexicographic extreme referencing'' and use it to obtain a formula (in Theorem \ref{count form}) for counting the number of occurrences of a given pattern in a given binary word. Then we use related concepts to give an algorithm for constructing all words that contain a given pattern, in Section \ref{build word}. Finally, in Section \ref{primitive}, we define a subset of binary words that are ``primitive'' and we modify the aforementioned algorithm to give one for constructing them, then we discuss approaches for finding $B_{n,p}(k)$ via primitive words.

\section{Acknowledgments}
The author would like to thank Anurag Singh for helpful discussions via email.

\section{Preliminaries}
We will mostly follow the notations and definitions in \cite{Menon}. A word $w$ is a finite sequence $w = w_1w_2 \ldots w_n$ where $w_i \in \{0, 1, 2, \ldots l\}$ for all $i \in [n]$. The $w_i$ are the \textbf{letters} of $w$, and $\{0, 1, 2, \ldots l\}$ is the \textbf{alphabet}. We denote the \textbf{length} of $w$ by $|w| = n$. We identify any subword $w'$ of $w$ by the tuple of indices locating $w'$ in $w$. For a binary word $p = p_1p_2 \ldots p_m$, an \textbf{occurrence} of $p$ in $w$ is a subword of $w$ that equals $p$; here we also refer to $p$ as a \textit{pattern}. Denote by $c_p(w)$ the number of occurrences of $p$ in $w$.

Since we will be studying binary words, the alphabet will be $\{0,1\}$. For a binary word $w$, a \textbf{run} of $w$ is a maximal subword consisting of consecutive letters that are identical (i.e. all 0 or all 1). To define the \textbf{parity} of a run $A$, 
we say that $A$ is \textbf{even} if it consists of 0's and \textbf{odd} if it consists of 1's. 

\begin{exm}
We write the bolded subword $w'$ of $w = 01\textbf{11}0\textbf{0}1\textbf{101}001\textbf{10}$ as $w' = (3,4,6,8,9,$ $10,14,15)$. We have $|w| = 15$ and $|w'| = 8$. $w$ has 5 even runs and 4 odd runs. For the pattern $p = 010011010110$, we have $c_p(w) = 3 \cdot 2 = 6$.
\end{exm}

\section{Formula for computing $c_p(w)$}
\label{calc occur}
Given binary words $p = p_1p_2 \ldots p_m$ and $w = w_1w_2 \ldots w_n$, we will compute $c_p(w)$. We introduce a method that we call \textbf{lexicographic extreme referencing}. Roughly speaking, we will use the lexicographically minimal occurrence of $p$ in $w$, if it exists, as a reference point to aid us in counting all occurrences.

Suppose $p$ consists of $r$ runs with \[p = A_1A_2 \ldots A_r,\] where \[A_i = p_{a^i_1}p_{a^i_2} \ldots p_{a^i_{c_i}}.\] Suppose $w$ consists of $s$ runs with \[w = B_1B_2 \ldots B_s,\] where \[B_i = w_{b^i_1}w_{b^i_2} \ldots w_{b^i_{d_i}}.\] For any letter $w_k$ of $w$, let $\mathrm{R}(w,w_k) \in [s]$ be the index of the run $B_{\mathrm{R}(w,w_k)}$ containing $w_k$.

For two runs $B_i, B_j$ of the same parity, we define the \textbf{same-parity sum from $B_i$ to $B_j$ in $w$}, denoted $\sigma(w;i,j)$, as $$\sigma(w;i,j) : = \sum_{k=0}^{\frac{j-i}{2}}{|B_{i+2k}|}.$$ We define the \textbf{same-parity left-span by $j$ units from $B_i$ in $w$}, denoted $\lambda(w;i;j)$, as $$\lambda(w;i;j) := \min\{k \mid \sigma(w;i-2k,i) \geq j\}.$$ Analogously, define the \textbf{same-parity right-span by $j$ units from $B_i$ in $w$}, denoted $\rho(w;i;j)$, as $$\rho(w;i;j) := \min\{k \mid \sigma(w;i,i+2k) \geq j\}.$$ 

For any run $B_i = w_{b^i_1}w_{b^i_2} \ldots w_{b^i_{d_i}}$ of $w$, let $\phi(B_i) := b^i_1$ and $\psi(B_i) = b^i_{d_i}$. For any two same-parity letters $w_{l_1}$ and $w_{l_2}$, let $\delta_w(l_1,l_2)$ be the number of same-parity letters between $w_{l_1}$ and $w_{l_2}$ inclusive.

We will match $A_1$ with the first run of $w$ having the same parity. Define $$\alpha_p^w :=  
     \begin{cases}
       1 &\quad\text{if }A_1 \text{ and }B_1 \text{ have the same parity } \\
       2 &\quad\text{otherwise}\\
     \end{cases}.$$

Similarly, we will match $A_r$ with the last run of $w$ having the same parity. Define $$\zeta_p^w :=   
     \begin{cases}
       s &\quad\text{if }A_r \text{ and }B_s \text{ have the same parity } \\
       s-1 &\quad\text{otherwise}\\
     \end{cases}.$$

\begin{exm}
Let $p = 110100$ and $w = 011100110100110$. We have $\mathrm{R}(w,w_3) = 2$, $\sigma(w;4,8) = 2+1+2 = 5$, $\lambda(w;7;4) = 2$, $\rho(w;2;6) = 2$, $\alpha_p^w = 2$, and $\zeta_p^w = 9$. We have $\phi(B_2) = 2$, $\psi(B_4) = 8$, and $\delta_w(3,13) = 6$.
\end{exm}

Denote by $\mathrm{L}_p(w)$ the lexicographic minimal occurrence of $p$ in $w$, if it exists; this means that, for any occurrence $p'$ in $w$ and any $j \in [m]$, the $j$th letter of $p'$ cannot be left of the $j$th letter of $\mathrm{L}_p(w)$ in $w$. We have $\mathrm{L}_p(w) = L_1L_2 \ldots L_r$, where $L_i = w_{l^i_1}w_{l^i_2} \ldots w_{l^i_{k_i}}$ is equal to $A_i$ as runs. Also, we define the \textbf{$i$th head of $\mathrm{L}_p(w)$} as $\mathrm{L}_p^i(w) := \mathrm{R}(w,w_{l_1^i})$; in other words, $\mathrm{L}_p^i(w)$ is the index of the first run of $w$ that intersects $L_i$. 
\begin{rem}
Note that $\mathrm{L}_p(w)$ is completely determined by the indices $\mathrm{L}_p^i(w)$, which will play a crucial role in our formula for $c_p(w)$ later.
\end{rem}
\begin{lem}
\label{lex min def}
Let $p = p_1p_2 \ldots p_m$ and $w = w_1w_2 \ldots w_n$ be binary words. Suppose $p$ consists of $r$ runs with $p = A_1A_2 \ldots A_r$, where $A_i = p_{a^i_1}p_{a^i_2} \ldots p_{a^i_{c_i}}$. Suppose $w$ consists of $s$ runs with $w = B_1B_2 \ldots B_s$, where $B_i = w_{b^i_1}w_{b^i_2} \ldots w_{b^i_{d_i}}$. If $\mathrm{L}_p(w)$ exists, then the indices $\mathrm{L}_p^i(w)$ are determined as follows:
\begin{enumerate}
\item $\mathrm{L}_p^1(w) = \alpha_p^w$
\item In general, we have $\mathrm{L}_p^{i+1}(w) = \mathrm{L}_p^i(w)+2\rho(w;\mathrm{L}_p^i(w);|A_i|)+1$
\end{enumerate}
\end{lem}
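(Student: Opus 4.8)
The plan is to first reinterpret lexicographic minimality as a greedy left-to-right matching, and then run an induction on $i$ that tracks which run of $w$ each $L_i$ begins in. Observe that, by the definition given, $\mathrm{L}_p(w)$ is letterwise leftmost, so it must coincide with the greedy occurrence obtained by scanning $p$ one letter at a time and assigning each $p_j$ to the leftmost letter of $w$ equal to $p_j$ that lies strictly to the right of the position assigned to $p_{j-1}$. I would justify this by a standard exchange argument: if some occurrence placed its $j$th letter strictly to the left of the greedy choice, then taking the least such $j$, its first $j-1$ letters sit weakly to the right of the greedy ones, so the greedy rule would already have selected that earlier position at step $j$ --- a contradiction. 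Since $\mathrm{L}_p(w)$ is assumed to exist, the greedy scan never runs out of room, and I may use it freely.

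For the base case I would argue that the first letter of $L_1$ is the globally leftmost letter of $w$ whose parity equals that of $A_1$, and this is the first letter of the first run of $w$ of that parity. Because the runs of $w$ alternate in parity, that run is $B_1$ when $A_1$ and $B_1$ agree in parity and $B_2$ otherwise; this is exactly $\alpha_p^w$, giving $\mathrm{L}_p^1(w) = \alpha_p^w$ and establishing the invariant below for $i=1$.

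For the inductive step I would carry the invariant that $L_i$ begins at the very first letter of run $B_{\mathrm{L}_p^i(w)}$, which has the same parity as $A_i$. Granting this, the greedy scan fills $L_i$ by taking consecutive same-parity letters starting from that run: it exhausts $B_{\mathrm{L}_p^i(w)}$, skips the opposite-parity run $B_{\mathrm{L}_p^i(w)+1}$, exhausts $B_{\mathrm{L}_p^i(w)+2}$, and so on. The number of same-parity letters available through run $B_{\mathrm{L}_p^i(w)+2k}$ is precisely $\sigma(w;\mathrm{L}_p^i(w);\mathrm{L}_p^i(w)+2k)$, so by the definition of the right-span the last run meeting $L_i$ is $B_{\mathrm{L}_p^i(w)+2\rho(w;\mathrm{L}_p^i(w);|A_i|)}$. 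Since $A_{i+1}$ has the opposite parity to $A_i$, the greedy rule places the first letter of $L_{i+1}$ at the leftmost letter after $L_i$ of the opposite parity, which --- as $B_{\mathrm{L}_p^i(w)+2\rho(w;\mathrm{L}_p^i(w);|A_i|)}$ carries $A_i$'s parity --- is the first letter of the next run $B_{\mathrm{L}_p^i(w)+2\rho(w;\mathrm{L}_p^i(w);|A_i|)+1}$. This simultaneously re-establishes the invariant for $i+1$ and yields $\mathrm{L}_p^{i+1}(w) = \mathrm{L}_p^i(w)+2\rho(w;\mathrm{L}_p^i(w);|A_i|)+1$. The main obstacle is the first step: cleanly proving that the letterwise-minimal occurrence is the greedy one and then transporting that letter-level statement to the run level, where the definitions of $\sigma$ and $\rho$ do the accounting. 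Once that correspondence is secured, the parity alternation of the runs of $w$ reduces the remainder to bookkeeping.
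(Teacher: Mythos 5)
Your proposal is correct and follows essentially the same route as the paper: identify $\mathrm{L}_p(w)$ with the greedy leftmost matching, locate the first letter of each $L_i$ at the start of a run, and let $\sigma$ and $\rho$ do the run-level accounting. The paper's own proof simply asserts that $B_{\mathrm{L}_p^1(w)}$ is the first run of matching parity and that $B_{\mathrm{L}_p^{i+1}(w)}$ is the run immediately after the rightmost run spanned by $|A_i|$ units, so your exchange argument and explicit induction only supply details the paper leaves implicit.
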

\begin{proof}
Since $B_{\mathrm{L}_p^1(w)}$ is the first run of $w$ with the same parity as $A_1$, we have $\mathrm{L}_p^1(w) = \alpha_p^w$. Since $B_{\mathrm{L}_p^2(w)}$ is the run immediately after the rightmost run spanned by $|A_1|$ units from $\mathrm{L}_p^1(w)$, we have $\mathrm{L}_p^2(w) = \mathrm{L}_p^1(w)+2\rho(w;\mathrm{L}_p^1(w);|A_1|)+1$. In general, since $B_{\mathrm{L}_p^{i+1}(w)}$ is the run immediately after the rightmost run spanned by $|A_i|$ units from $\mathrm{L}_p^{i}(w)$, we have $\mathrm{L}_p^{i+1}(w) = \mathrm{L}_p^i(w)+2\rho(w;\mathrm{L}_p^i(w);|A_i|)+1$.
\end{proof}

\begin{exm}
Let $p = 110100$ and $w = 011100110100110$. The lexicographic minimal occurrence $\mathrm{L}_p(w)$ is shown in bold below: \[w = 0\textbf{11}1\textbf{0}0\textbf{1}1\textbf{0}1\textbf{0}0110\]
$\mathrm{L}_p(w)$ corresponds to the indices $(2,3,5,7,9,11)$. We have $\mathrm{L}_p^1(w)=2$, $\mathrm{L}_p^2(w)=3$, $\mathrm{L}_p^3(w)=4$ $\mathrm{L}_p^4(w)=5$. 
\end{exm}

\begin{exm}
Let $p = 0011100011$ and $w = 1001011101000111010$. The lexicographic minimal occurrence $\mathrm{L}_p(w)$ is shown in bold below: \[w = 1\textbf{00}\textbf{1}0\textbf{11}1\textbf{0}1\textbf{00}0\textbf{11}1010\]
$\mathrm{L}_p(w)$ corresponds to the indices $(2,3,4,6,7,9,11,12,14,15)$. We have $\mathrm{L}_p^1(w) = 2$, $\mathrm{L}_p^2(w) = 3$, $\mathrm{L}_p^3(w) = 6$, $\mathrm{L}_p^4(w) = 9$.
\end{exm}

Our formula for $c_p(w)$ consists of $r$ nested summations, one for each run of $p$. It expresses $c_p(w)$ in terms of the run-structure of $p$ and $w$ as well as the indices $\mathrm{L}_p^i(w)$, which by Lemma \ref{lex min def} are also determined by the run-structure. The rough idea of this formula is that all occurrences in $w$ are sandwiched between the lexicographic minimum and the lexicographic maximum.
\begin{thm}
\label{count form}
Let $p = p_1p_2 \ldots p_m$ and $w = w_1w_2 \ldots w_n$ be binary words. Suppose $p$ consists of $r$ runs with $p = A_1A_2 \ldots A_r$, where $A_i = p_{a^i_1}p_{a^i_2} \ldots p_{a^i_{c_i}}$. Suppose $w$ consists of $s$ runs with $w = B_1B_2 \ldots B_s$, where $B_i = w_{b^i_1}w_{b^i_2} \ldots w_{b^i_{d_i}}$. If $\mathrm{L}_p(w)$ exists, then we have 
\begin{multline}
c_p(w) = \sum_{\substack{
q_1=|A_r|, |A_r|+1,\dots ,\sigma(w;\mathrm{L}_p^r(w),\zeta_p^w)\\
q_2=|A_{r-1}|,|A_{r-1}|+1,\ldots,\sigma(w;\mathrm{L}_p^{r-1}(w),t(q_1))\\
q_3=|A_{r-2}|,|A_{r-2}|+1,\ldots,\sigma(w;\mathrm{L}_p^{r-2}(w),t(q_2))\\
\vdots\\
q_r=|A_1|,|A_1|+1,\ldots,\sigma(w;\mathrm{L}_p^1(w),t(q_{r-1}))
}} {q_1-1 \choose |A_r|-1}{q_2-1 \choose |A_{r-1}|-1}\ldots{q_r-1 \choose |A_1|-1} \\
\end{multline}
where $t(q_1) = \zeta_p^w-2\lambda(w;\zeta_p^w;q_1)-1$ and $t(q_{k+1}) = t(q_k)-2\lambda(w;t(q_k);q_{k+1})-1$.
\end{thm}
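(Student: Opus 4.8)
The plan is to realize the formula as the output of a right-to-left bijective decomposition of occurrences, using $\mathrm{L}_p(w)$ as a universal left reference. First I would record the basic reformulation: an occurrence of $p$ in $w$ is an increasing sequence of positions $1 \le j_1 < \cdots < j_m \le n$ with $w_{j_t} = p_t$, and grouping these positions by the runs $A_1, \ldots, A_r$ shows that an occurrence is the same data as a choice, for each $i$, of $|A_i|$ positions of $w$ all having the parity of $A_i$, subject to the single linking constraint that the last position used by $A_i$ is strictly left of the first position used by $A_{i+1}$. Since the parities of the $A_i$ alternate, the positions for $A_i$ necessarily lie in runs of $w$ of the parity of $A_i$; the content of the theorem is to count these configurations by slicing them run-by-run from the right.

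Next I would introduce the parametrization behind the $r$ summation variables. Given an occurrence, for the rightmost run $A_r$ set $q_1$ to be the number of letters of $w$ of the parity of $A_r$ lying weakly between the first position used by $A_r$ and the last same-parity letter of $w$ (the end of $B_{\zeta_p^w}$). Then (i) by the lexicographic-minimality property the first position of $A_r$ cannot lie left of its position in $\mathrm{L}_p(w)$, which Lemma \ref{lex min def} places at the start of run $B_{\mathrm{L}_p^r(w)}$, whence $q_1 \le \sigma(w;\mathrm{L}_p^r(w),\zeta_p^w)$; and (ii) we need at least $|A_r|$ such letters, so $q_1 \ge |A_r|$. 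Crucially, $q_1$ pins down the first position of $A_r$ uniquely (the same-parity letter with exactly $q_1$ same-parity letters to its right inclusive), after which the remaining $|A_r|-1$ letters may be placed arbitrarily among the $q_1-1$ same-parity letters strictly to its right, giving the factor $\binom{q_1-1}{|A_r|-1}$. Iterating, once the first position of $A_{r-k+1}$ is fixed I take the right boundary for $A_{r-k}$ to be the run $t(q_k)$ and define $q_{k+1}$ as the number of letters of the parity of $A_{r-k}$ weakly between the first position of $A_{r-k}$ and the end of run $B_{t(q_k)}$, producing the factor $\binom{q_{k+1}-1}{|A_{r-k}|-1}$ with the left bound again supplied by $\mathrm{L}_p^{r-k}(w)$.

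The heart of the argument, and what I expect to be the main obstacle, is to verify that the run index $t(q_k)$ encodes the linking constraint exactly. I would show that if the first position of $A_{r-k+1}$ is the same-parity letter with $q_k$ same-parity letters to its right up to the relevant boundary, then by the definition of $\lambda$ this position lies in run $B_{\zeta_p^w - 2\lambda(w;\zeta_p^w;q_1)}$ when $k=1$, and in general in run $B_{t(q_{k-1}) - 2\lambda(w;t(q_{k-1});q_k)}$, so the run immediately to its left is precisely $t(q_k)$. A parity count then yields the dichotomy I need: run $t(q_k)$ has the parity of $A_{r-k}$ and lies entirely left of the first position of $A_{r-k+1}$, while the next same-parity run $t(q_k)+2$ begins only after the run $t(q_k)+1$ that holds the first position of $A_{r-k+1}$, hence lies entirely to its right. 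Therefore requiring the last position of $A_{r-k}$ to sit in a run of index at most $t(q_k)$ is equivalent to the linking constraint, with neither slack nor loss; this makes $\sigma(w;\mathrm{L}_p^{r-k}(w),t(q_k))$ the correct upper limit for $q_{k+1}$ and simultaneously guarantees that the binomial choices for distinct runs occupy disjoint, freely combinable sets of same-parity letters.

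Finally I would assemble these pieces into a bijection: the map sending an occurrence to the tuple $(q_1,\ldots,q_r)$ together with the binomial choices at each level is well defined, its image is exactly the index set of the multiple sum (the stated ranges being precisely the feasible values, with infeasible configurations producing empty inner sums), and it is invertible, since the first positions are recovered from the $q_k$ and the remaining positions from the binomial choices. Summing $\prod_{k=1}^{r}\binom{q_k-1}{|A_{r-k+1}|-1}$ over this index set then gives $c_p(w)$, which is the asserted identity. The only points needing care beyond the parity dichotomy are that the left reference is uniformly governed by $\mathrm{L}_p(w)$, a direct consequence of the defining property of the lexicographic minimum together with Lemma \ref{lex min def}, and that every value of $q_k$ in the stated range is attained, which holds because sliding the first position of a run rightward through consecutive same-parity letters decreases the corresponding $q_k$ by exactly one.
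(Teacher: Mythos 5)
Your proposal is correct and takes essentially the same approach as the paper's proof: a right-to-left decomposition of each occurrence in which $q_k$ records the position of the leftmost letter of $A'_{r-k+1}$ relative to the boundary run, bounded below via lexicographic minimality by $\mathrm{L}_p^{r-k+1}(w)$ and above by $B_{t(q_{k-1})}$, with the remaining letters contributing $\binom{q_k-1}{|A_{r-k+1}|-1}$. The only difference is that you make explicit the parity argument showing $B_{t(q_k)}$ is exactly the run encoding the linking constraint, a point the paper dismisses with ``by definition.''
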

\begin{proof}
Note that $B_{t(q_{k+1})}$ is the run immediately left of the leftmost run spanned by $q_{k+1}$ units from $B_{t(q_{k})}$ in $w$, by definition. Let $p' = A'_1A'_2 \ldots A'_r$ be an occurrence of $p$ in $w$, where $A'_i$ is equal to $A_i$ as runs. Then the runs $A'_r, A'_{r-1}, \ldots, A'_1$ are determined in the following manner:
\begin{enumerate}
\item Since $A'_r$ must be no further left than $B_{\mathrm{L}_p^r(w)}$ by lexicographic minimality, choose $|A_r|$ letters from among the runs $B_{\zeta_p^w}, B_{\zeta_p^w-2}, \ldots, B_{\mathrm{L}_p^r(w)+2}, B_{\mathrm{L}_p^r(w)}$ to form $A'_r$. If $w_{r'_1}$ is the leftmost of these letters, then $q_1 := \delta_w(r'_1,\psi(B_{\zeta_p^w}))$ ranges through $q_1 = |A_r|, |A_r|+1, \ldots, \sigma(w;\mathrm{L}_p^r(w),\zeta_p^w)$.
\item Since $A'_{r-1}$ must be to the left of $A'_r$ and no further left than $B_{\mathrm{L}_p^{r-1}(w)}$ by lexicographic minimality, choose $|A_{r-1}|$ letters from among the runs $B_{t(q_1)}, B_{t(q_1)-2}, \ldots, B_{\mathrm{L}_p^{r-1}(w)+2}, B_{\mathrm{L}_p^{r-1}(w)}$ to form $A'_{r-1}$. If $w_{r'_2}$ is the leftmost of these letters, then $q_2 := \delta_w(r'_2,\psi(B_{t(q_1)}))$ ranges through $q_2 = |A_{r-1}|, |A_{r-1}|+1, \ldots, \sigma(w;\mathrm{L}_p^{r-1}(w),t(q_1))$.
\item In general, suppose $A'_{r-i+1}$ has been determined with associated $q_i$, for some $i < r$. We now determine $A'_{r-i}$. Since $A'_{r-i}$ must be to the left of $A'_{r-i+1}$ and no further left than $B_{\mathrm{L}_p^{r-i}(w)}$ by lexicographic minimality, choose $|A_{r-i}|$ letters from among the runs $B_{t(q_i)}, B_{t(q_i)-2}, \ldots, B_{\mathrm{L}_p^{r-i}(w)+2}, B_{\mathrm{L}_p^{r-i}(w)}$ to form $A'_{r-i}$. If $w_{r'_{i+1}}$ is the leftmost of these letters, then $q_{i+1} := \delta_w(r'_{i+1},\psi(B_{t(q_i)}))$ ranges through $q_{i+1} = |A_{r-i}|, |A_{r-i}|+1, \ldots, \sigma(w;\mathrm{L}_p^{r-i}(w),t(q_i))$.
\end{enumerate}
It follows that 
\begin{equation*}\begin{split}
c_p(w) = \sum_{q_1=|A_r|}^{\sigma(w;\mathrm{L}_p^r(w),\zeta_p^w)}{q_1-1 \choose |A_r|-1}\sum_{q_2=|A_{r-1}|}^{\sigma(w;\mathrm{L}_p^{r-1}(w),t(q_1))}{q_2-1 \choose |A_{r-1}|-1}
\sum_{q_3=|A_{r-2}|}^{\sigma(w;\mathrm{L}_p^{r-2}(w),t(q_2))}{q_3-1 \choose |A_{r-2}|-1}\ldots \\
\sum_{q_r=|A_1|}^{\sigma(w;\mathrm{L}_p^1(w),t(q_{r-1}))}{q_r-1 \choose |A_1|-1}                    
\end{split}\end{equation*}
and hence 
\begin{multline*}
c_p(w) = \sum_{\substack{
q_1=|A_r|, |A_r|+1,\dots ,\sigma(w;\mathrm{L}_p^r(w),\zeta_p^w)\\
q_2=|A_{r-1}|,|A_{r-1}|+1,\ldots,\sigma(w;\mathrm{L}_p^{r-1}(w),t(q_1))\\
q_3=|A_{r-2}|,|A_{r-2}|+1,\ldots,\sigma(w;\mathrm{L}_p^{r-2}(w),t(q_2))\\
\vdots\\
q_r=|A_1|,|A_1|+1,\ldots,\sigma(w;\mathrm{L}_p^1(w),t(q_{r-1}))
}} {q_1-1 \choose |A_r|-1}{q_2-1 \choose |A_{r-1}|-1}\ldots{q_r-1 \choose |A_1|-1} \\
\end{multline*}
\end{proof}


\begin{exm}
Let $p = 0011100011$ and $w = 1001011101000111010$. We have $r=4$ and $\zeta_p^w = 11$, and we have $\mathrm{L}_p^1(w) = 2$, $\mathrm{L}_p^2(w) = 3$, $\mathrm{L}_p^3(w) = 6$, $\mathrm{L}_p^4(w) = 9$. 
\begin{enumerate}
\item Since $\sigma(w;9,11) = 4$, we have $2 \leq q_1 \leq 4$.
\item If $q_1 = 3$, then $t(q_1) = 11-2\lambda(w;11;3)-1 = 8$ and $\sigma(w;6,8) = 4$, so we have $3 \leq q_2 \leq 4$.
\item If $q_2 = 4$, then $t(q_2) = 8-2\lambda(w;8;4)-1 = 5$ and $\sigma(w;3,5) = 4$, so we have $3 \leq q_3 \leq 4$.
\item If $q_3 = 4$, then $t(q_3) = 5-2\lambda(w;5;4)-1 = 2$ and $\sigma(w;2,2) = 2$, so we have $2 \leq q_4 \leq 2$ $\Rightarrow$ $q_4 = 2$.
\end{enumerate}
\end{exm}

\begin{exm}
Let $p = 1111000001$ and $w = 101010011001010101011$. We have $r=3$ and $\zeta_p^w = 17$, and we have $\mathrm{L}_p^1(w) = 1$, $\mathrm{L}_p^2(w) = 8$, $\mathrm{L}_p^3(w) = 15$.
\begin{enumerate}
\item Since $\sigma(w;15,17) = 3$, we have $1 \leq q_1 \leq 3$.
\item If $q_1 = 2$, then $t(q_1) = 17-2\lambda(w;17;2)-1 = 16$ and $\sigma(w;8,16) = 6$, so we have $5 \leq q_2 \leq 6$.
\item If $q_2 = 6$, then $t(q_2) = 16-2\lambda(w;16;6)-1 = 7$ and $\sigma(w;1,7) = 5$, so we have $4 \leq q_3 \leq 5$.
\end{enumerate}
\end{exm}

\begin{exm}
Let $p = 0010$ and $w = 0001010$. Then 
\begin{align*}
c_p(w) &= \sum_{q_1=1}^{2}{\sum_{q_2=1}^{\sigma(w;2,t(q_1))}{\sum_{q_3=2}^{\sigma(w;1,t(q_2))}{{q_1-1 \choose 0}{q_2-1 \choose 0}{q_3-1 \choose 1}}}} \\
&= \left(\sum_{q_2=1}^{\sigma(w;2,4)}{\sum_{q_3=2}^{\sigma(w;1,t(q_2))}{q_3-1}}\right) + \left(\sum_{q_2=1}^{\sigma(w;2,2)}{\sum_{q_3=2}^{\sigma(w;1,t(q_2))}{q_3-1}}\right) \\
&= \left(\sum_{q_3=2}^{\sigma(w;1,3)}{q_3-1}\right) + \left(\sum_{q_3=2}^{\sigma(w;1,1)}{q_3-1}\right) + \left(\sum_{q_3=2}^{\sigma(w;1,1)}{q_3-1}\right) \\ 
&= 6+3+3 \\
&= 12
\end{align*}
\end{exm}

\section{Application to finding $B_{n,p}(k)$}
Let ${\cal B}_{n,p}(k)$ denote the set of words of length $n$ containing $p$ exactly $k$ times, so $B_{n,p}(k) = |{\cal B}_{n,p}(k)|$. In the absence of a general formula for computing the numbers $B_{n,p}(k)$ for arbitrary $n$ and $k$, one could opt for a brute-force approach that involves searching for the elements of ${\cal B}_{n,p}(k)$ in a structured way. We first give an algorithm for building all words of length $n$ containing $p$. Then we provide a modified version of this algorithm that constructs only those words that are ``primitive'', streamlining the search process.
\subsection{Building binary word from given pattern}
\label{build word}
If $w$ is a binary word with length $n$ containing $p$, then we can construct $w$ by adding $n-m$ letters to $p$ in a manner that preserves $p$ as the lexicographically minimal occurrence. To do this, we first characterize lexicographic minimality as follows.
\begin{lem}
\label{lex char}
Let $p = p_1p_2 \ldots p_m$ be a binary word consisting of $r$ runs with $p = A_1A_2 \ldots A_r$, where $A_i = p_{a^i_1}p_{a^i_2} \ldots p_{a^i_{c_i}}$. Suppose $p' = A'_1A'_2 \ldots A'_r$ is an occurrence of $p$ in $w$, where $A'_i$ equals $A_i$ as runs. Then $p'$ is lexicographically minimal if and only if the following holds:

For every run $A'_i$, if a letter $p_{a^i_1}$ occurs before $A'_i$ in $w$, then there are no occurrences of the subword $A_1A_2 \ldots A_{i-1}$ before this letter.
\end{lem}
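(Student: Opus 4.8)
The plan is to prove both implications through the standard description of $\mathrm{L}_p(w)$ as the \emph{greedy leftmost embedding} of $p$ into $w$: among all occurrences, $\mathrm{L}_p(w)$ is the one in which every letter is placed at the earliest position of the correct value lying to the right of the previously placed letter. Equivalently, by the definition of lexicographic minimality recorded above, $p'$ is lexicographically minimal if and only if no occurrence of $p$ in $w$ has any letter strictly to the left of the corresponding letter of $p'$. I would first isolate this reformulation, since it lets me argue letter-by-letter rather than reasoning about a whole occurrence at once.

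For the forward direction I would argue by contraposition. Suppose the run condition fails for some run $A'_i$: there is a letter of $w$ equal to $p_{a^i_1}$ at some position $x$ lying to the left of $A'_i$, and there is an occurrence $O$ of the prefix $A_1 A_2 \ldots A_{i-1}$ entirely to the left of $x$. I then build a competing occurrence $p''$ by taking its first $a^i_1 - 1$ letters from $O$, placing its $a^i_1$-th letter (the first letter of run $i$) at position $x$, and placing its remaining letters indexed $a^i_1+1, \ldots, m$ at exactly the positions that $p'$ uses for those indices. This $p''$ is a genuine occurrence: the letters of $O$ have the correct values and sit before $x$, the letter at $x$ has the correct value, and the reused tail of $p'$ consists of correctly-valued letters all lying to the right of $x$ (since they lie to the right of the start of $A'_i$, which lies to the right of $x$). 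As the $a^i_1$-th letter of $p''$ sits at $x$, strictly to the left of the $a^i_1$-th letter of $p'$, the occurrence $p'$ is not lexicographically minimal. This shows that lexicographic minimality forces the run condition.

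For the converse I would assume the run condition and compare $p'$ with $\mathrm{L}_p(w)$. If they differ, let $j$ be the first index at which their letters occupy different positions; by minimality of $\mathrm{L}_p(w)$, its $j$-th letter lies strictly to the left of the $j$-th letter of $p'$, while the first $j-1$ letters coincide. Writing letter $j$ as the $t$-th letter of run $i$, the case $t = 1$ is immediate: the agreeing prefix $A_1 \ldots A_{i-1}$ then furnishes an occurrence of that prefix lying before the position used by $\mathrm{L}_p(w)$ for letter $j$, and that position is a letter of value $p_{a^i_1}$ strictly to the left of $A'_i$, contradicting the run condition. The main obstacle is the case $t \geq 2$, where letter $j$ is interior to a run: there the discrepancy produces an \emph{unused} letter of value $p_{a^i_1}$ sitting between the $(t-1)$-th and $t$-th letters of $A'_i$, a position that is not to the left of $A'_i$ and hence is not directly controlled by the stated condition. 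To close this gap I would show that a first discrepancy can always be normalized to a run-initial letter, equivalently that once the run condition holds, the leftmost embedding fills the letters within each run at the earliest available same-valued positions, so no interior index can be the first point of difference. Making this within-run greediness precise and reconciling it with the run-boundary formulation of the hypothesis is the delicate point; everything else reduces to the bookkeeping already encoded in Lemma \ref{lex min def}.
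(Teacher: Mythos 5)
The paper states Lemma \ref{lex char} without any proof, so there is nothing to compare your argument against; it has to stand on its own. Your forward direction is correct: the spliced occurrence $p''$ is well defined because the reused tail of $p'$ lies strictly to the right of $x$, and it witnesses failure of letterwise minimality at index $a^i_1$. The problem is the converse, and specifically the case $t\geq 2$ that you flag as ``the delicate point'': it is not merely delicate, it is where the statement, read literally, fails. Take $p=011$ (runs $A_1=0$, $A_2=11$), $w=0111$, and $p'=w_1w_2w_4$. No letter $0$ precedes $A'_1$, and no letter $1$ precedes the first letter of $A'_2$ (position $2$), so the run condition holds vacuously for every run; yet $p'$ is not lexicographically minimal, since $\mathrm{L}_p(w)=w_1w_2w_3$. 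Consequently the repair you propose --- showing that a first discrepancy can always be normalized to a run-initial letter --- cannot succeed: in this example the only discrepancy sits at the interior letter of $A_2$, exactly the configuration you could not control.

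What is needed is to read ``a letter $p_{a^i_1}$ occurs before $A'_i$'' as ``a letter of value $p_{a^i_1}$ occurs at a position smaller than the \emph{last} position of $A'_i$ and not belonging to $A'_i$''. Under that reading your converse closes uniformly with no case split on $t$: if the first discrepancy between $p'$ and $\mathrm{L}_p(w)$ is at the $t$-th letter of run $i$, the position $y$ used there by $\mathrm{L}_p(w)$ carries the value $p_{a^i_1}$, lies strictly before the last letter of $A'_i$, and lies outside $A'_i$; meanwhile the first $a^i_1-1$ letters of $p'$ (which agree with $\mathrm{L}_p(w)$ and hence sit entirely to the left of $y$) form an occurrence of $A_1A_2\ldots A_{i-1}$ before $y$, contradicting the condition whether $t=1$ or $t\geq 2$. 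The forward direction then needs the symmetric extension: when $x$ falls between the $(t-1)$-th and $t$-th letters of $A'_i$, splice at index $a^i_1+t-1$ instead, reusing the positions of $p'$ for letters $a^i_1+t,\ldots,m$. As written, your proof is incomplete, and the completion you sketch is blocked by a counterexample rather than by bookkeeping.
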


Lemma \ref{lex char} imposes restrictions on where the $n-m$ additional letters can be inserted in $p$. Taking this into account, we now provide the following algorithm to construct every word of a given length containing $p$. Roughly speaking, we insert letters for every run of $p$ in reverse order, and for each run $A_i = p_{a^i_1}p_{a^i_2} \ldots p_{a^i_{c_i}}$ of $p$, we insert letters $p_{a^i_1}$ after $A_i$ but not after $A_{i+1}$.

\begin{thm}
\label{algo gen}
Let $p = p_1p_2 \ldots p_m$ be a binary word consisting of $r$ runs with $p = A_1A_2 \ldots A_r$, where $A_i = p_{a^i_1}p_{a^i_2} \ldots p_{a^i_{c_i}}$. Suppose $w$ has length $n$ and contains $p$. Then $w$ is constructed from $p$ as follows. Choose nonnegative integers $s_1, s_2, \ldots, s_r, s_{r+1}$ such that $s_1 + s_2 + \ldots + s_r + s_{r+1} = n-m$.

\begin{enumerate}
\item Construct the word $\chi_1(p) := A_1A_2 \ldots A_{r-1}X_r$, where $X_r$ is obtained by adding $s_1$ copies of $p_{a_1^{r}}$ after $A_r$.
\item Construct the word $\chi_2(p) := A_1A_2 \ldots A_{r-1}X_{r-1}$, where $X_{r-1}$ is obtained by permuting the letters of $X_r$ and $s_2$ copies of $p_{a_1^{r-1}}$.
\item Construct the word $\chi_3(p) := A_1A_2 \ldots A_{r-2}X_{r-2}X_{r-1}$, where $X_{r-2} = X'_{r-2}p_{a_{c_{r-1}}^{r-1}}$ and $X'_{r-2}$ is obtained by permuting the first $|A_{r-1}|-1$ letters of $A_{r-1}$ and $s_3$ copies of $p_{a_1^{r-2}}$.
\item In general, construct the word $\chi_k(p) := A_1A_2 \ldots A_{r-k+1}X_{r-k+1}X_{r-k+2} \ldots X_{r-1}$, where $X_{r-k+1} = X'_{r-k+1}p_{a_{c_{r-k+2}}^{r-k+2}}$ and $X'_{r-k+1}$ is obtained by permuting the first $|A_{r-k+2}|-1$ letters of $A_{r-k+2}$ and $s_k$ copies of $p_{a_1^{r-k+1}}$.
\item Finally, construct the word $w = \chi_{r+1}(p) := X_0X_1 \ldots X_{r-1}$, where $X_0 = X'_0p_{a_{c_1}^1}$ and $X'_0$ is obtained by permuting the first $|A_1|-1$ letters of $A_1$ and $s_{r+1}$ copies of $1-p_{a_1^1}$.
\end{enumerate}
\end{thm}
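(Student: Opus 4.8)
The statement to establish is \emph{completeness}: every binary word $w$ of length $n$ containing $p$ is produced by the procedure for some choice of the nonnegative integers $s_1,\dots,s_{r+1}$ and of the intermediate permutations. Since $w$ contains $p$, its lexicographic minimal occurrence $\mathrm{L}_p(w)=L_1L_2\ldots L_r$ exists, and by Lemma \ref{lex min def} it is the greedy leftmost matching, so each $L_i$ occupies the leftmost $|A_i|$ letters of the parity of $A_i$ available after $L_{i-1}$. My plan is to induct on $r$, peeling off the run $A_1$ by inverting the final step of the construction, and to use Lemma \ref{lex char} at each stage to show that the placement of the $n-m$ extra letters is exactly what the construction permits. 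Soundness---that each output genuinely has length $n$ and keeps $p$ as its lexicographic minimal occurrence---follows from the same application of Lemma \ref{lex char} and is the easier direction.

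For the inductive step, let $\pi$ be the position in $w$ of the last letter of $L_1$. Applying Lemma \ref{lex char} to the run $A_1$ (whose predecessor $A_1\ldots A_0$ is empty and hence occurs trivially) forces that no letter of the parity of $A_1$ lies before the first letter of $L_1$; consequently, among the positions $1,\dots,\pi$, the letters of the parity of $A_1$ are precisely the $|A_1|$ letters of $L_1$, and every other letter there has the opposite parity. I would therefore set $X_0:=w_1\ldots w_\pi$, read off $s_{r+1}$ as the number of opposite-parity letters it contains, record their positions relative to the first $|A_1|-1$ letters of $L_1$ as the permutation used in the final step, and note that $w_\pi$ plays the role of the anchor $p_{a^1_{c_1}}$. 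The suffix $w':=w_{\pi+1}\ldots w_n$ then contains $\tilde p:=A_2\ldots A_r$, and its lexicographic minimal occurrence is $L_2\ldots L_r$, since leftmost matching of $A_2,\dots,A_r$ after position $\pi$ coincides with the corresponding tail of $\mathrm{L}_p(w)$. The inductive hypothesis applies to $w'$ and $\tilde p$.

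It remains to recombine, that is, to check that the procedure for $p$ equals ``run the procedure for $\tilde p$, then prepend $X_0$.'' The identity that makes this work is that step $r$ for $p$---which processes $A_1$ by shuffling $s_r$ copies of $p_{a_1^1}$ into the first $|A_2|-1$ letters of $A_2$ and appending the last letter of $A_2$---is verbatim the final step of the procedure for $\tilde p$, whose first run is $A_2$ and whose opposite-parity letter is exactly $p_{a_1^1}$. Hence the blocks $X_1\ldots X_{r-1}$ output by $\tilde p$'s procedure equal $w'$ by the inductive hypothesis, and prepending the block $X_0$ recovered above yields $w=X_0X_1\ldots X_{r-1}$ with the claimed parameters; the $s_i$ are nonnegative and sum to $n-m$ by construction. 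The base case $r=1$ (a single run) is verified directly.

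I expect the principal obstacle to be the bookkeeping around the anchors and the absorbed block, rather than any single hard estimate. One must argue that the cut point $\pi$, and more generally the last letter of each $L_i$, is canonically determined by $w$ through the greedy description of Lemma \ref{lex min def}, so that the region decomposition underlying the induction is unambiguous and each extra letter is assigned to a unique $s_k$. The most delicate spot is the right end, where the block $X_r$ of step 1 has been folded into $X_{r-1}$ in step 2 and where extras of two opposite parities (the $s_1$ copies of parity $A_r$ and the $s_2$ copies of parity $A_{r-1}$) coexist in a single tail; one checks that these are separated by parity and are produced correctly at the bottom of the recursion. In every case the real content is that Lemma \ref{lex char} characterizes \emph{exactly} the admissible insertion sites, so that the forced placement obtained while decoding $w$ matches the free choices---the $s_i$ together with the shuffles---made while building it.
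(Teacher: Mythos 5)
Your proposal is sound, but note that the paper itself offers no proof of Theorem \ref{algo gen} (nor of Lemma \ref{lex char}); it only gestures at Lemma \ref{lex char} as ``imposing restrictions'' on where the $n-m$ extra letters may go. So there is no written argument to compare against, and what you supply is a legitimate way to fill that gap. Your route --- induct on the number of runs $r$, peel off the block $X_0$ ending at the last letter of $L_1$, apply the hypothesis to the suffix with pattern $A_2\ldots A_r$, and recombine --- works, and you correctly identify the one identity that makes the recursion close: step $r$ of the procedure for $p$ (shuffling $s_r$ copies of $p_{a_1^1}$ into the first $|A_2|-1$ letters of $A_2$ and anchoring the last letter of $A_2$) coincides with the final step of the procedure for $A_2\ldots A_r$, whose ``opposite-parity'' letter is exactly $p_{a_1^1}$; the remaining blocks and parameters match index-for-index. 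An equally natural non-inductive argument just cuts $w$ at the positions $e_1<\dots<e_{r-1}$ of the last letters of $L_1,\dots,L_{r-1}$ and checks each region's structure directly; your induction is that argument organized recursively.

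Two points deserve tightening. First, your ``consequently'' --- that \emph{all} letters of the parity of $A_1$ among positions $1,\dots,\pi$ belong to $L_1$ --- does not follow from the $i=1$ case of Lemma \ref{lex char} alone, which only forbids such letters before the \emph{first} letter of $L_1$; you need the greedy/leftmost description of $\mathrm{L}_p(w)$ (which you do cite from Lemma \ref{lex min def}), or a one-line exchange argument: a stray same-parity letter strictly inside the span of $L_1$ would let you move the $|A_1|$-th letter left, contradicting minimality. The same remark applies at every later cut point. Second, the base case $r=1$ is not quite ``verified directly,'' because there the procedure's own statement degenerates: steps $2$ and $r+1$ collide, the block $X_{r-1}$ is simultaneously the head and the tail block, and a literal reading loses the $s_1$ appended letters (so that, e.g., a word ending in $1-p_{a_1^1}$ is never produced). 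This is a defect of the theorem's formulation at $r=1$ rather than of your proof, but since your induction bottoms out exactly there, you should either state the base case as $r=2$ or record the intended $r=1$ reading (an arbitrary shuffle of $|A_1|+s_1$ letters $p_{a_1^1}$ with $s_2$ letters $1-p_{a_1^1}$) before invoking it.
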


In the examples that follow, for each letter $v$, we use $\bar{v}$ to mark $v$ as part of a run of the original word $p$ and we use $\hat{v}$ to mark $v$ as a newly added letter.
\begin{exm}
Let $p = 0010000111011$. We can construct a word $w$ of length 26 as follows.
\begin{enumerate}
\item $\chi_1(p) = 00100001110\bar{1}\bar{1}$ (here we chose to add no additional letters)
\item $\chi_2(p) = 0010000111\bar{0}\hat{0}1\hat{0}1$
\item $\chi_3(p) = 0010000\bar{1}\bar{1}\bar{1}\hat{1}00101$
\item $\chi_4(p) = 001\bar{0}\bar{0}\bar{0}\bar{0}1\hat{0}1\hat{0}\hat{0}1100101$
\item $\chi_5(p) = 00\bar{1}0\hat{1}0\hat{1}\hat{1}0\hat{1}0101001100101$
\item $\chi_6(p) = \bar{0}\bar{0}\hat{0}101011010101001100101$
\item $w = \chi_7(p) = \hat{1}0\hat{1}00101011010101001100101$
\end{enumerate}
\end{exm}

\subsection{Primitive words}
\label{primitive}
Let $p = p_1p_2 \ldots p_m$ and $w = w_1w_2 \ldots w_n$ be binary words. Suppose $p$ consists of $r$ runs with $p = A_1A_2 \ldots A_r$, where $A_i = p_{a^i_1}p_{a^i_2} \ldots p_{a^i_{c_i}}$. Suppose $w$ consists of $s$ runs with $w = B_1B_2 \ldots B_s$, where $B_i = w_{b^i_1}w_{b^i_2} \ldots w_{b^i_{d_i}}$. 

For a positive integer $k$ and $j \in [n]$, let $\mathrm{Llet}_w(j,k)$ denote the index of the $(k-1)$st same-parity letter left of $w_j$; in other words, there are exactly $k$ same-parity letters between $w_j$ and $w_{\mathrm{Llet}_w(j,k)}$ inclusive. Analogously, let $\mathrm{Rlet}_w(j,k)$ denote the index of the $(k-1)$st same-parity letter right of $w_j$. Let $\mathrm{tail}(w;j)$ denote the subword $w_jw_{j+1} \ldots w_n$.
\begin{exm}
For $w = 10100101011010101001100101$, we have $\mathrm{Llet}_w(18,7) = 5$, $\mathrm{Rlet}_w(11,4) = 17$, and $\mathrm{tail}(w;19) = 01100101$. 
\end{exm}

In analogy to $\mathrm{L}_p(w)$, we define (cf. Lemma \ref{lex min def}) the lexicographic maximal occurrence $\mathrm{R}_p(w)$ of $p$ in $w$ and the associated indices $\mathrm{R}_p^i(w)$ as follows:
\begin{enumerate}
\item $\mathrm{R}^1_p(w) = \zeta_p^w$
\item In general, we have $\mathrm{R}^{i+1}_p(w) = \mathrm{R}^i_p(w) - 2\lambda(w;\mathrm{R}^i_p(w);|A_{r-i+1}|)-1$
\end{enumerate}
We will use the shorthand $\psi_p^i[w] := \psi(B_{\mathrm{R}^{i}_p(w)})$, denoting the index of the last letter of $B_{\mathrm{R}^{i}_p(w)}$.
\begin{exm}
Let $p = 0010000111011$ and $w = 10100101011010101001100101$. Then we have the following:
\begin{enumerate}
\item $\mathrm{R}_p^1(w) = 21$ and $\psi_p^1[w] = 26$
\item $\mathrm{R}_p^2(w) = 18$ and $\psi_p^2[w] = 23$
\item $\mathrm{R}_p^3(w) = 17$ and $\psi_p^3[w] = 21$
\item $\mathrm{R}_p^4(w) = 14$ and $\psi_p^4[w] = 16$
\item $\mathrm{R}_p^5(w) = 7$ and $\psi_p^5[w] = 8$
\item $\mathrm{R}_p^6(w) = 6$ and $\psi_p^6[w] = 7$
\end{enumerate}
\end{exm}

We call $w$ \textbf{primitive with respect to $p$} if every letter of $w$ is contained in an occurrence of $p$. Note that if $w = B_1B_2 \ldots B_s$ is primitive, then $\alpha_p^w = 1$ and $\zeta_p^w = s$. 

\begin{exm}
For $p = 0010000111011$, we have the following: 
\begin{enumerate}
\item $w = 0\hat{1}0100\hat{1}0\hat{1}01\hat{0}\hat{0}1101\hat{0}1$ is not primitive because the hatted letters are not contained in any occurrence of $p$.
\item $w = 00010010\hat{1}00101110011$ is not primitive because the hatted letter is not contained in any occurrence of $p$.
\item $w = 0001001000101110011$ is primitive.
\end{enumerate}
\end{exm}

We characterize the primitive words in the following
\begin{thm}
\label{ext let}
Let $p = p_1p_2 \ldots p_m$ and $w = w_1w_2 \ldots w_n$ be binary words. Suppose $p$ consists of $r$ runs with $p = A_1A_2 \ldots A_r$, where $A_i = p_{a^i_1}p_{a^i_2} \ldots p_{a^i_{c_i}}$. Suppose $w$ consists of $s$ runs with $w = B_1B_2 \ldots B_s$, where $B_i = w_{b^i_1}w_{b^i_2} \ldots w_{b^i_{d_i}}$. For any $l \in [n]$, if letter $w_l$ does not belong to $\mathrm{L}_p(w)$ or $\mathrm{R}_p(w)$, then the following hold.
\begin{enumerate}
\item If $\mathrm{Llet}_w(\psi_p^j[w],|A_{r-j+1}|) < l < \mathrm{Rlet}_w(\phi(B_{\mathrm{L}_p^{r-j+1}(w)}),|A_{r-j+1}|)$ for some $j \in [r]$, then $w_l$ is not contained in any occurrence of $p$.
\item If $l < \mathrm{Rlet}_w(\phi(B_{\mathrm{L}_p^{1}(w)}),|A_{1}|)$ or $l > \mathrm{Llet}_w(\psi_p^1[w],|A_{r}|)$, then $w_l$ is not contained in any occurrence of $p$. 
\item Otherwise, $w_l$ is contained in an occurrence of $p$ in $w$.
\end{enumerate}
\end{thm}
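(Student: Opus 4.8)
The plan is to translate everything into a single geometric picture: every occurrence of $p$ is trapped between the two extreme occurrences $\mathrm{L}_p(w)$ and $\mathrm{R}_p(w)$, and all three cases fall out of this "sandwiching" together with a parity count. Write $L_i$ and $R_i$ for the $i$th runs of $\mathrm{L}_p(w)$ and $\mathrm{R}_p(w)$, and abbreviate $\mathrm{start}_i := \phi(B_{\mathrm{L}_p^i(w)})$, $P_i := \mathrm{Rlet}_w(\mathrm{start}_i,|A_i|)$ for the first and last positions of $L_i$, and $\mathrm{end}_i := \psi_p^{r-i+1}[w]$, $Q_i := \mathrm{Llet}_w(\mathrm{end}_i,|A_i|)$ for the last and first positions of $R_i$. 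In this notation the hypothesis of Part 1 (with $i=r-j+1$) reads $Q_i < l < P_i$ and that of Part 2 reads $l < P_1$ or $l > Q_r$. First I would record a Sandwiching Lemma: for every occurrence $p' = A'_1\cdots A'_r$ the first letter of $A'_i$ has index $\ge \mathrm{start}_i$ and the last letter has index $\le \mathrm{end}_i$. This is immediate from lexicographic extremality, since the coordinates of $\mathrm{L}_p(w)$ (resp.\ $\mathrm{R}_p(w)$) are coordinatewise leftmost (resp.\ rightmost), exactly as in Lemma \ref{lex min def}. I would also note the monotonicity $P_t < \mathrm{start}_{t+1}$ and $\mathrm{end}_t < Q_{t+1}$ (the runs $L_1,\dots,L_r$ and $R_1,\dots,R_r$ are each disjoint and left-to-right), so that $\mathrm{end}_t < Q_i$ when $t<i$ and $\mathrm{start}_t > P_i$ when $t>i$, together with the elementary fact that $L_i$ consists of all same-parity letters in $[\mathrm{start}_i,P_i]$ and $R_i$ of all same-parity letters in $[Q_i,\mathrm{end}_i]$, with $\mathrm{start}_i\le Q_i$ and $P_i\le\mathrm{end}_i$.

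Parts 1 and 2 are then short. For Part 1, assume $Q_i < l < P_i$ and $w_l\notin\mathrm{L}_p(w)\cup\mathrm{R}_p(w)$. I would first show $w_l$ has the opposite parity to $A_i$: if it had the same parity, then $\mathrm{start}_i\le Q_i<l<P_i$ forces $w_l\in L_i$ while $Q_i<l<P_i\le\mathrm{end}_i$ forces $w_l\in R_i$, contradicting the hypothesis. Hence a hypothetical occurrence would place $w_l$ in some run $A'_t$ with $t\ne i$; if $t<i$ then $l\le\mathrm{end}_t<Q_i$, and if $t>i$ then $l\ge\mathrm{start}_t>P_i$, each contradicting $Q_i<l<P_i$. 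For Part 2 with $l<P_1$ (the case $l>Q_r$ is symmetric): if $w_l\in A'_t$ with $t\ge 2$ then $l\ge\mathrm{start}_t>P_1$, a contradiction, while $t=1$ forces $\mathrm{start}_1\le l<P_1$ and hence $w_l\in L_1$, again a contradiction.

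The real content is Part 3, which I would prove by explicitly constructing an occurrence through $w_l$ by splicing. Let $T=\{t:A_t\text{ has the parity of }w_l\}$. The key reduction is that it suffices to find $t\in T$ with $P_{t-1}<l<Q_{t+1}$ (reading $P_0=-\infty$, $Q_{r+1}=+\infty$): indeed $L_t\subset(P_{t-1},Q_{t+1})$ supplies $|A_t|$ same-parity letters inside this window, and since $w_l\notin L_t$ it is one more, so I may take $A'_t$ to be $w_l$ together with any $|A_t|-1$ letters of $L_t$. Splicing the lex-min prefix $L_1\cdots L_{t-1}$ (ending at $P_{t-1}<l$), this run $A'_t$, and the lex-max suffix $R_{t+1}\cdots R_r$ (starting at $Q_{t+1}>l$) produces a subword occupying disjoint, correctly ordered, correctly alternating blocks, hence a genuine occurrence of $p$ containing $w_l$.

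The step I expect to be the main obstacle is extracting such a $t$ from the mere failure of Parts 1 and 2. The negation of Part 2 confines $l$ to $[P_1,Q_r]$ and the negation of Part 1 makes $l$ avoid every interval $(Q_i,P_i)$; I would argue that the windows $\{(P_{t-1},Q_{t+1})\}_{t\in T}$ cover $[P_1,Q_r]$ with the only omissions being the intervals $(Q_{t+1},P_{t+1})$ attached to the opposite-parity runs. Concretely, for consecutive $t,t+2\in T$ the right end $Q_{t+1}$ of one window and the left end $P_{t+1}$ of the next either overlap (when $P_{t+1}\le Q_{t+1}$, leaving no gap) or leave exactly the gap $(Q_{t+1},P_{t+1})$, which is a Part 1 interval, while its endpoints $Q_{t+1},P_{t+1}$ lie in $\mathrm{R}_p(w)\cup\mathrm{L}_p(w)$ and are excluded by hypothesis. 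The delicate points are the parity bookkeeping in this covering — verifying that the inter-window gaps are precisely the opposite-parity Part 1 intervals — and the boundary behaviour at $\min T$ and $\max T$, where the bounds $l\ge P_1$ and $l\le Q_r$ from Part 2 must be shown to supply the missing half-open ends. I would also flag here that the whole argument tacitly needs $T\ne\varnothing$, i.e.\ that the parity of $w_l$ is realized by some run of $p$; this covering-plus-parity analysis is the crux, and the rest is routine bookkeeping on top of the Sandwiching Lemma.
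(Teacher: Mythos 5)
Your proposal follows the same strategy as the paper's own proof: every occurrence is sandwiched coordinatewise between $\mathrm{L}_p(w)$ and $\mathrm{R}_p(w)$, Parts 1 and 2 are parity-plus-position contradictions, and Part 3 is proved by splicing a prefix of $\mathrm{L}_p(w)$, a run through $w_l$, and a suffix of $\mathrm{R}_p(w)$. The difference is one of completeness, in your favour. The paper's Part 3 only exhibits the splice in two position ranges ($\phi(B_{\mathrm{L}_p^{r-j+1}(w)}) < l < \mathrm{Llet}_w(\psi_p^j[w],|A_{r-j+1}|)$ and its mirror) and never checks that the negation of Parts 1 and 2 actually forces $l$ into a range where the splice is possible; your covering argument --- the windows $(P_{t-1},Q_{t+1})$ for $t\in T$ cover $[P_1,Q_r]$ up to the Part 1 gaps $(Q_{t+1},P_{t+1})$ and endpoints excluded by parity or by membership in $\mathrm{L}_p(w)\cup\mathrm{R}_p(w)$ --- is exactly the missing verification, and the monotonicity and parity bookkeeping you outline does go through. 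Your closing flag is also substantive rather than cosmetic: when $r=1$ and $w_l$ has the opposite parity to $A_1$ (so $T=\varnothing$), the theorem as stated is false. For $p=11$ and $w=11011$, the letter $w_3=0$ satisfies neither the Part 1 condition (which reads $4<l<2$, vacuous) nor the Part 2 condition ($l<2$ or $l>4$), yet it lies in no occurrence of $p$. So the statement needs the implicit hypothesis that the parity of $w_l$ is realized by some run of $p$ (automatic for $r\ge 2$); with that hypothesis your argument is complete and, on Part 3, strictly more rigorous than the paper's.
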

\begin{proof}
First note that $w_l$ is automatically contained in an occurrence of $p$ if it belongs to $\mathrm{L}_p(w)$ or $\mathrm{R}_p(w)$. We now handle the aforementioned three cases separately; see Example \ref{lem ex} for illustration.
\begin{enumerate}
\item We need only consider the case $w_l = 1-w_{\psi_p^j[w]}$. Suppose $p'$ is an occurrence of $p$ containing $w_l$. Since $\mathrm{Llet}_w(\psi_p^j[w],|A_{r-j+1}|) < l < \mathrm{Rlet}_w(\phi(B_{\mathrm{L}_p^{r-j+1}(w)}),|A_{r-j+1}|)$, $p'$ must be either lexicographically less than $\mathrm{L}_p(w)$ or lexicographically greater than $\mathrm{R}_p(w)$, which is a contradiction.
\item We need only consider the case $l < \mathrm{Rlet}_w(\phi(B_{\mathrm{L}_p^{1}(w)}),|A_{1}|)$ with $w_l = 1-w_{\phi(B_{\mathrm{L}_p^{1}(w)})}$. Any occurrence of $p$ containing $w_l$ would then need to have at least $|A_1|$ letters $w_{\phi(B_{\mathrm{L}_p^{1}(w)})}$ left of $w_l$, which is impossible. The situation $l > \mathrm{Llet}_w(\psi_p^1[w],|A_{r}|)$ is analogous.
\item We need only consider the case $w_l = 1-w_{\psi_p^j[w]}$. If $\mathrm{Llet}_w(\psi_p^j[w],|A_{r-j+1}|) > l$ and $l > \phi(B_{\mathrm{L}_p^{r-j+1}(w)})$, then an occurrence of $p$ containing $w_l$ can be constructed by concatenating a subword of $\mathrm{L}_p(w)$, $w_l$, and a subword of $\mathrm{R}_p(w)$, in that order. The occurrence of $p$ can be constructed analogously in the situation $l > \mathrm{Rlet}_w(\phi(B_{\mathrm{L}_p^{r-j+1}(w)}),|A_{r-j+1}|)$ and $l < \psi_p^j[w]$.
\end{enumerate}
\end{proof}
\begin{exm}
\label{lem ex}
For $p = 0010000111011$, observe the following.
\begin{enumerate}
\item If $w = 1010010\hat{1}0\hat{1}\hat{1}0\hat{1}0101001100101$, the hatted letters are not contained in any occurrence of $p$, because otherwise such an occurrence would have to be either lexicographically less than $\mathrm{L}_p(w)$ or lexicographically greater than $\mathrm{R}_p(w)$.
\item If $w = 10\hat{1}00101011010101001100101$, the hatted letter is not contained in any occurrence of $p$ because there are not enough 0's to the left.
\item If $w = \textbf{0010000}1101\hat{\textbf{1}}0001\textbf{11011}$, the hatted letter is contained in the bolded subword.
\item If $w = \textbf{00}100\hat{\textbf{1}}0011\textbf{0}1\textbf{000}\textbf{111011}$, the hatted letter is contained in the bolded subword.
\end{enumerate}
\end{exm}

Using Theorem \ref{ext let}, we can construct primitive words containing $p$ by the following modified form of the general procedure in Theorem \ref{algo gen}. Roughly speaking, an additional restriction on the insertion of $n-m$ letters in $p$ is imposed: At the $k$th stage $\eta_k(p)$ of the construction, no letters can be inserted between the $(\mathrm{Llet}_{\eta_{k-1}(p)}(\psi_p^{k-1}[\eta_{k-1}(p)],|A_{r-k+2}|))$th and the $(\psi(A_{r-k+2}))$th letters of $\eta_{k-1}(p)$.

\begin{thm}
\label{algo prim}
Let $p = p_1p_2 \ldots p_m$ be a binary word consisting of $r$ runs with $p = A_1A_2 \ldots A_r$, where $A_i = p_{a^i_1}p_{a^i_2} \ldots p_{a^i_{c_i}}$. Suppose $w$ has length $n$ and contains $p$. Then $w$ is primitive with respect to $p$ if and only if $w$ is constructed in the following manner:

Choose nonnegative integers $t_1, t_2, \ldots, t_r$ such that $t_1 + t_2 + \ldots + t_r = n-m$.
\begin{enumerate}
\item Construct the word $\eta_1(p) := A_1A_2 \ldots A_{r-1}Y_r$, where $Y_r$ is obtained by adding $t_1$ copies of $p_{a_1^{r}}$ after $A_r$.
\item  Construct the word $\eta_2(p) := A_1A_2 \ldots A_{r-1}Y_{r-1}Y'_{r-1}$, where \[Y'_{r-1} = \mathrm{tail}(\eta_1(p),\min(\mathrm{Llet}_{\eta_1(p)}(\psi_p^1[\eta_1(p)],|A_r|),\psi(A_r)))\] and $Y_{r-1}$ is obtained by permuting $\delta_{\eta_1(p)}(\phi(A_r),\min(\mathrm{Llet}_{\eta_1(p)}(\psi_p^1[\eta_1(p)],|A_r|),\psi(A_r)))-1$ copies of $p_{a_1^r}$ and $t_2$ copies of $p_{a_1^{r-1}}$.
\item Construct the word $\eta_3(p) := A_1A_2 \ldots A_{r-2}Y_{r-2}Y'_{r-2}$, where \[Y'_{r-2} = \mathrm{tail}(\eta_2(p),\min(\mathrm{Llet}_{\eta_2(p)}(\psi_p^2[\eta_2(p)],|A_{r-1}|),\psi(A_{r-1}))) \] and $Y_{r-2}$ is obtained by permuting $$\delta_{\eta_2(p)}(\phi(A_{r-1}),\min(\mathrm{Llet}_{\eta_2(p)}(\psi_p^2[\eta_2(p)],|A_{r-1}|),\psi(A_{r-1})))-1$$ copies of $p_{a_1^{r-1}}$ and $t_3$ copies of $p_{a_1^{r-2}}$.
\item In general, construct the word $\eta_k(p) := A_1A_2 \ldots A_{r-k+1}Y_{r-k+1}Y'_{r-k+1}$, where \[Y'_{r-k+1} = \mathrm{tail}(\eta_{k-1}(p),\min(\mathrm{Llet}_{\eta_{k-1}(p)}(\psi_p^{k-1}[\eta_{k-1}(p)],|A_{r-k+2}|),\psi(A_{r-k+2}))) \] and $Y_{r-k+1}$ is obtained by permuting $$\delta_{\eta_{k-1}(p)}(\phi(A_{r-k+2}),\min(\mathrm{Llet}_{\eta_{k-1}(p)}(\psi_p^{k-1}[\eta_{k-1}(p)],|A_{r-k+2}|),\psi(A_{r-k+2})))-1$$ copies of $p_{a_1^{r-k+2}}$ and $t_k$ copies of $p_{a_1^{r-k+1}}$.
\item Finally, take $w := \eta_r(p)$.
\end{enumerate}
\end{thm}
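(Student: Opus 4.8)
The plan is to prove this biconditional by leveraging Theorem~\ref{algo gen}, which already realizes every length-$n$ word containing $p$ as an insertion of $n-m$ letters into $p$ (with $p$ kept lexicographically minimal), together with Theorem~\ref{ext let}, which identifies exactly which letters of such a word lie in some occurrence of $p$. The primitive construction is a restriction of the general one, and the crux is that its extra constraint---freezing the tail of $\eta_{k-1}(p)$ that begins at position $\min(\mathrm{Llet}_{\eta_{k-1}(p)}(\psi_p^{k-1}[\eta_{k-1}(p)],|A_{r-k+2}|),\psi(A_{r-k+2}))$---is precisely the region flagged by parts (1) and (2) of Theorem~\ref{ext let} as producing letters outside every occurrence. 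A first structural observation worth recording is that the primitive procedure runs for $r$ stages with parameters $t_1,\ldots,t_r$, whereas the general procedure of Theorem~\ref{algo gen} runs for $r+1$ stages: the dropped final stage is exactly the $\chi_{r+1}$ step that inserts $s_{r+1}$ copies of $1-p_{a_1^1}$ before $A_1$, and those opposite-parity letters are the ones barred by part (2) of Theorem~\ref{ext let}.

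First I would set up an induction on the stage index $k$, carrying the invariant that $\eta_k(p)$ has $A_1A_2\ldots A_{r-k+1}$ intact as a prefix of runs, still admits $p$ as its lexicographically minimal occurrence (checked against Lemma~\ref{lex char}), and that every letter inserted through stage $k$ already lies in an occurrence of $p$. Within the step I would verify that $\delta_{\eta_{k-1}(p)}(\phi(A_{r-k+2}),\min(\mathrm{Llet}_{\eta_{k-1}(p)}(\psi_p^{k-1}[\eta_{k-1}(p)],|A_{r-k+2}|),\psi(A_{r-k+2})))-1$ is exactly the number of same-parity slots left free for permutation once the tail $Y'_{r-k+1}$ is frozen, so that the $t_k$ newly added copies of $p_{a_1^{r-k+1}}$ together with the preserved copies of $p_{a_1^{r-k+2}}$ account for all inserted letters and $w=\eta_r(p)$ indeed has length $n$ whenever $t_1+\cdots+t_r=n-m$.

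For the forward direction (construction $\Rightarrow$ primitive) I would take an arbitrary inserted letter $w_l$ and rule out the two bad cases of Theorem~\ref{ext let}. The tail-freezing at stage $k=j+1$ places the freeze point at $\mathrm{Llet}_w(\psi_p^j[w],|A_{r-j+1}|)$, and since the construction inserts letters only to the left of this point (but to the right of the intact head run $B_{\mathrm{L}_p^{r-j+1}(w)}$), no inserted letter can land strictly between $\mathrm{Llet}_w(\psi_p^j[w],|A_{r-j+1}|)$ and $\mathrm{Rlet}_w(\phi(B_{\mathrm{L}_p^{r-j+1}(w)}),|A_{r-j+1}|)$, which excludes case (1); the absence of any insertion before $A_1$ excludes case (2). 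Hence case (3) applies and $w_l$ lies in an occurrence. For the backward direction (primitive $\Rightarrow$ construction) I would argue contrapositively: Theorem~\ref{algo gen} already writes $w$ as an insertion, and if any inserted letter violated the tail-freezing constraint (or if $s_{r+1}>0$), that letter would fall into case (1) or (2) of Theorem~\ref{ext let} and therefore lie in no occurrence of $p$, contradicting primitivity; so the insertions must conform to the restricted procedure, giving a valid choice of $t_1,\ldots,t_r$.

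The main obstacle I anticipate is the index bookkeeping in the inductive step. Because the construction proceeds in reverse run order while Theorem~\ref{ext let} is phrased for the finished word $w$, I must track how each $\psi_p^{k-1}[\eta_{k-1}(p)]$, each run-head $\mathrm{L}_p^{r-j+1}$, and each $\mathrm{Llet}$/$\mathrm{tail}$ position migrates as later stages insert their letters, and confirm that the frozen tails leave these anchor positions undisturbed---so that the forbidden region named at stage $k$ of the construction coincides exactly with the forbidden region of Theorem~\ref{ext let} in the completed word. Establishing that the minimum with $\psi(A_{r-k+2})$ is the correct freeze point in the boundary cases (when the $\mathrm{Llet}$ index reaches into the head run) is the most delicate part of this tracking.
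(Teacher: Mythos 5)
The paper states Theorem \ref{algo prim} without any proof, offering only the informal remark preceding it that the procedure is Theorem \ref{algo gen} restricted by the forbidden regions identified in Theorem \ref{ext let}; your outline fleshes out exactly that intended argument, and both directions (inserted letters avoid cases (1)--(2) of Theorem \ref{ext let} hence fall in case (3), and conversely any violation of the tail-freezing or a positive $s_{r+1}$ produces a letter in no occurrence) are sound. The one point you rightly flag as delicate --- verifying that the anchors $\psi_p^{k-1}[\eta_{k-1}(p)]$ and the freeze points $\min(\mathrm{Llet}_{\eta_{k-1}(p)}(\cdot),\psi(A_{r-k+2}))$ named stage-by-stage coincide with the forbidden intervals of Theorem \ref{ext let} in the completed word $w$ --- is the only substantive step left to close, and it does not change the overall strategy, which matches the paper's stated intent.
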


\begin{exm}
Let $p = 110011100011$. We can construct a primitive word $w$ of length 17 as follows.
\begin{enumerate}
\item $\eta_1(p) = 1100111000\bar{1}\bar{1}\hat{1}$
\item $\eta_2(p) = 1100111\bar{0}\bar{0}\bar{0}1\hat{0}11$ (note that no more 0's can be inserted after ``$\hat{0}1$'')
\item $\eta_3(p) = 1100\bar{1}\bar{1}\bar{1}0\hat{1}001011$ (note that no more 1's can be inserted after ``$\hat{1}$0'')
\item $\eta_4(p) = 11\bar{0}\bar{0}11101001011$ (here we chose to insert no 0's)
\item $w = \eta_5(p) = \bar{1}\bar{1}\hat{1}\hat{1}0011101001011$ (note that no more 1's can be inserted after ``$\hat{1}0$'', and that no more 0's can be inserted at all after this stage)
\end{enumerate}
\end{exm}

Clearly, any word containing $p$ can be reduced to its primitive form by removing the extraneous letters. Conversely, if $w$ is primitive with $c_p(w) = k > 0$, then $w$ is a minimal word containing $p$ exactly $k$ times, and the following procedure completes $w$ to a word $v$ of length $m' > n$ with $c_p(v) = k$.

\begin{prop}
\label{prim compl}
Let $p = p_1p_2 \ldots p_m$ be a binary word consisting of $r$ runs with $p = A_1A_2 \ldots A_r$, where $A_i = p_{a^i_1}p_{a^i_2} \ldots p_{a^i_{c_i}}$. Suppose $w$ is primitive with length $n$ and $c_p(w) = k > 0$. Then $v$ is a word of length $m' > n$ containing $w$ and $c_p(v) = k$ if and only if $v$ is constructed as follows. Choose nonnegative integers $u_1, u_2, \ldots, u_r$ such that $u_1+u_s+ \ldots + u_r = m'-n$. 
\begin{enumerate}
\item For each $j \in \{2,3,\ldots,r\}$, replace the subword \[w_{\mathrm{Llet}_w(\psi_p^j[w],|A_{r-j+1}|)+1} \ldots w_{\mathrm{Rlet}_w(\phi(B_{\mathrm{L}_p^{r-j+1}(w)}),|A_{r-j+1}|)-1}\] with \[w_{\mathrm{Llet}_w(\psi_p^j[w],|A_{r-j+1}|)+1} Z_j w_{\mathrm{Rlet}_w(\phi(B_{\mathrm{L}_p^{r-j+1}(w)}),|A_{r-j+1}|)-1},\] where $Z_j$ is obtained by permuting $\mathrm{Rlet}_w(\phi(B_{\mathrm{L}_p^{r-j+1}(w)}),|A_{r-j+1}|)-\mathrm{Llet}_w(\psi_p^j[w],|A_{r-j+1}|)$ $-3$ copies of $w_{\mathrm{Llet}_w(\psi_p^j[w],|A_{r-j+1}|)+1}$ and $u_j$ copies of $1-w_{\mathrm{Llet}_w(\psi_p^j[w],|A_{r-j+1}|)+1}$. 
\item Replace the subword $w_{\mathrm{Llet}_w(\psi_p^1[w],|A_{r}|)+1} \ldots w_{n}$ with $Z_1$, where $Z_1$ is obtained by permuting $|A_r|-1$ copies of $w_n$ and $u_1$ copies of $1-w_n$.
\item Replace the subword $w_{1} \ldots w_{\mathrm{Rlet}_w(\phi(B_{\mathrm{L}_p^{1}(w)}),|A_{1}|)-1}$ with $Z_r$, where $Z_r$ is obtained by permuting $|A_1|-1$ copies of $w_1$ and $u_r$ copies of $1-w_1$. 
\item $u_i = 0$ whenever $\mathrm{Llet}_w(\psi_p^j[w],|A_{r-j+1}|) > \mathrm{Rlet}_w(\phi(B_{\mathrm{L}_p^{r-j+1}(w)}),|A_{r-j+1}|)$.
\end{enumerate}
\end{prop}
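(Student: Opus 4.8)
The plan is to prove the equivalence by combining a simple monotonicity observation with Theorem \ref{ext let}, which pinpoints exactly which positions of a word lie in no occurrence of $p$. Throughout, write $\iota$ for the order-preserving injection of the positions of $w$ into those of $v$ coming from the containment $w \subseteq v$, so that every occurrence of $p$ in $w$ pushes forward to a distinct occurrence of $p$ in $v$, forcing $c_p(v) \geq c_p(w) = k$. The backbone of the argument is the following equivalence: for a word $v$ containing $w$, one has $c_p(v) = k$ if and only if no occurrence of $p$ in $v$ uses a position outside the image of $\iota$; indeed, any occurrence through an inserted position is distinct from the $k$ pushed-forward occurrences, so its existence would give $c_p(v) \geq k+1$. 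Hence $c_p(v) = k$ is equivalent to every inserted letter being a dead letter of $v$, and in that case $\iota$ is a bijection between occurrences in $w$ and in $v$; since $\iota$ is order preserving, this yields $\mathrm{L}_p(v) = \iota(\mathrm{L}_p(w))$ and $\mathrm{R}_p(v) = \iota(\mathrm{R}_p(w))$.

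For the forward direction, suppose $v$ has length $m' > n$, contains $w$, and satisfies $c_p(v) = k$. By the backbone equivalence each inserted letter is dead in $v$, so Theorem \ref{ext let}, applied to $v$, places it in one of the interior zones of case (1) or one of the outer tails of case (2). Transporting these zones back along $\iota$, so that the boundary indices $\mathrm{Llet}_w(\psi_p^j[w],|A_{r-j+1}|)$ and $\mathrm{Rlet}_w(\phi(B_{\mathrm{L}_p^{r-j+1}(w)}),|A_{r-j+1}|)$ are exactly the images of the corresponding boundaries computed from $\mathrm{L}_p(v)$ and $\mathrm{R}_p(v)$, I would read off that the inserted letters of the $j$-th interior zone are opposite in parity to the surrounding run $A_{r-j+1}$ and lie strictly between the two boundary indices. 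Since $w$ is primitive, the interior of each nonempty zone of $w$ carries no dead letters and is therefore a uniform run of the same parity as $A_{r-j+1}$; interspersing $u_j$ opposite-parity letters into it is precisely the replacement of step (1), and the two outer tails give steps (2) and (3). Where a zone is inverted, that is $\mathrm{Llet} > \mathrm{Rlet}$, no dead position is available, which is condition (4).

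For the reverse direction, suppose $v$ is built as prescribed. That $v$ contains $w$ is immediate, since each step fixes the endpoints of the affected subword and only permutes the original (uniform) interior letters together with newly added opposite-parity copies; deleting the added letters returns $w$, so $c_p(v) \geq k$. To obtain equality I would compute $\mathrm{L}_p(v)$ and $\mathrm{R}_p(v)$ directly from Lemma \ref{lex min def} and its dual recursion, verifying that inserting opposite-parity letters strictly inside the slack of run $A_{r-j+1}$ (and opposite-parity letters into the two tails) neither supplies an earlier position for any run of $\mathrm{L}_p$ nor a later position for any run of $\mathrm{R}_p$; hence $\mathrm{L}_p(v) = \iota(\mathrm{L}_p(w))$ and $\mathrm{R}_p(v) = \iota(\mathrm{R}_p(w))$. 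With the lexicographic extremes of $v$ so identified, Theorem \ref{ext let} applied to $v$ shows each added letter lies in a dead zone of $v$ and is therefore contained in no occurrence of $p$; by the backbone equivalence this forces $c_p(v) = k$.

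The step I expect to be the main obstacle is precisely the identification $\mathrm{L}_p(v) = \iota(\mathrm{L}_p(w))$ and $\mathrm{R}_p(v) = \iota(\mathrm{R}_p(w))$ in the reverse direction, together with the matching of zone boundaries in the forward direction: inserting opposite-parity letters creates new runs and shifts every index lying to their right, so one must track carefully that the added letters genuinely fall strictly between the extreme run-positions and never extend a run used by $\mathrm{L}_p$ or $\mathrm{R}_p$. The degenerate cases -- the two outer tails of steps (2)--(3), where one boundary is an endpoint of $w$, and the inverted zones of condition (4), where $\mathrm{Llet} > \mathrm{Rlet}$ leaves no room -- will each require a short separate check, but the parity and counting bookkeeping there is routine once the extremes are pinned down.
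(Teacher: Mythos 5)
The paper states Proposition \ref{prim compl} without any proof, so there is nothing to compare your argument against; I can only assess it on its own terms. Your reduction is correct and is surely the intended skeleton: fixing the order-preserving embedding $\iota$ of $w$ into $v$, the condition $c_p(v)=k$ is equivalent to every inserted letter lying in no occurrence of $p$ in $v$, and Theorem \ref{ext let}, applied to $v$, is the right tool for locating such letters. That part of your write-up is clean.

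The gap is that everything specific to the proposition is deferred. Theorem \ref{ext let} describes the dead zones of $v$ in terms of $\mathrm{L}_p(v)$, $\mathrm{R}_p(v)$, $\psi_p^j[v]$ and the runs of $v$, while the construction is phrased via $\mathrm{Llet}_w$, $\mathrm{Rlet}_w$, $\psi_p^j[w]$ computed in $w$. Inserting opposite-parity letters splits runs of $w$ into several runs of $v$ and reindexes the recursion defining $\mathrm{R}_p^i$, so the identity $\mathrm{Llet}_v(\psi_p^j[v],|A_{r-j+1}|)=\iota\bigl(\mathrm{Llet}_w(\psi_p^j[w],|A_{r-j+1}|)\bigr)$ (and its $\mathrm{Rlet}$ counterpart) is precisely the content to be proved, not a transport to be assumed; you flag this yourself but do not carry it out, and without it neither direction is established. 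Moreover, if one does grant the transport, your forward direction certifies completions that the stated construction does not generate: Theorem \ref{ext let} declares dead any opposite-parity letter strictly between the two transported boundaries, including one inserted immediately after position $\mathrm{Llet}_w(\psi_p^j[w],|A_{r-j+1}|)$, whereas step (1) keeps $w_{\mathrm{Llet}_w(\cdots)+1}$ and $w_{\mathrm{Rlet}_w(\cdots)-1}$ fixed and confines $Z_j$ to the $\mathrm{Rlet}-\mathrm{Llet}-3$ letters strictly between them. You must either show such boundary insertions create a new occurrence (your own reasoning suggests they do not) or conclude that the ``only if'' direction fails for the construction as literally written. Relatedly, zones with $\mathrm{Rlet}-\mathrm{Llet}\le 2$ make the count $\mathrm{Rlet}-\mathrm{Llet}-3$ negative yet still admit insertions (this already happens in the paper's own example following the proposition), so the degenerate cases you promise to check ``separately'' are not routine: they are where the statement itself needs repair, and a complete proof cannot avoid confronting them.
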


\begin{exm}
Let $p = 0010000111011$. $w = 0010000\hat{0}111\hat{1}011$ is primitive with $k = c_{p}(w) = 20$. To complete $w$ to a word $v$ of length 20, additional letters must be inserted into the following spaces (indicated by blanks) in $w$: $$\_0\_0100\_0\_0011\_1101\_1\_$$
Some examples of $v$: $00100\hat{1}\hat{1}0\hat{1}0011\hat{0}11011\hat{0}$, $\hat{1}0\hat{1}010000011\hat{0}\hat{0}1101\hat{0}1$, $0\hat{1}\hat{1}0100000111101\hat{0}\hat{0}1\hat{0}$
\end{exm}

\begin{prop}
Let $p = p_1p_2 \ldots p_m$ and $w = w_1w_2 \ldots w_n$ be binary words. Suppose $p$ consists of $r$ runs with $p = A_1A_2 \ldots A_r$, where $A_i = p_{a^i_1}p_{a^i_2} \ldots p_{a^i_{c_i}}$. Suppose $w$ consists of $s$ runs with $w = B_1B_2 \ldots B_s$, where $B_i = w_{b^i_1}w_{b^i_2} \ldots w_{b^i_{d_i}}$. If $w$ is primitive with $c_p(w) = k > 0$, then there are 
\begin{equation*}
\begin{split}
PC(p,w,m') := \sum_{u_1+u_2+\ldots+u_r=m'-n}{|A_1|-1+u_r \choose u_r}{|A_r|-1+u_1 \choose u_1}\cdot \\
\prod_{j=2}^{r}{\mathrm{Rlet}_w(\phi(B_{\mathrm{L}_p^{r-j+1}(w)}),|A_{r-j+1}|)-\mathrm{Llet}_w(\psi_p^j[w],|A_{r-j+1}|)-3+u_j \choose u_j}
\end{split} \end{equation*} 
ways to complete $w$ to a word $v$ of length $m' > n$ with $c_p(v) = k$.
\end{prop}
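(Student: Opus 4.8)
The plan is to read the count off directly from the bijective construction already established in Proposition~\ref{prim compl}. That proposition shows that a word $v$ of length $m' > n$ containing $w$ satisfies $c_p(v) = k$ if and only if $v$ is obtained from $w$ by choosing a composition $u_1 + u_2 + \dots + u_r = m'-n$ and then replacing, at the disjoint index ranges specified there, certain constant-letter blocks of $w$ by permuted blocks $Z_1, \dots, Z_r$. Thus the enumeration reduces to a purely combinatorial task: counting the distinct outputs of this construction. The main work is to show that each admissible $v$ is produced exactly once and to count the arrangements region by region.

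First I would fix a composition $(u_1, \dots, u_r)$ and count the distinct $v$ it yields. Since the blocks modified in Proposition~\ref{prim compl} occupy pairwise disjoint ranges of indices of $w$ (they are delimited by the anchor letters coming from $\mathrm{L}_p(w)$ and $\mathrm{R}_p(w)$), the blocks $Z_1,\dots,Z_r$ may be chosen independently, so the number of $v$ for a fixed composition factors as a product of per-region counts. Each $Z_j$ is a binary string built from a fixed number $a_j$ of copies of one letter together with $u_j$ copies of the opposite letter, and two such strings coincide exactly when the positions of the minority letter agree; hence the number of choices for $Z_j$ is $\binom{a_j+u_j}{u_j}$. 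Matching this against the three cases of Proposition~\ref{prim compl} gives $a=|A_r|-1$ for the tail block, $a=|A_1|-1$ for the head block, and $a_j = \mathrm{Rlet}_w(\phi(B_{\mathrm{L}_p^{r-j+1}(w)}),|A_{r-j+1}|) - \mathrm{Llet}_w(\psi_p^j[w],|A_{r-j+1}|) - 3$ for the internal blocks, which reproduces the three types of binomial factor appearing in $PC(p,w,m')$. Reconciling exactly which variable $u_j$ labels which modified range is a bookkeeping matter that I would carry out explicitly; summing the resulting product over all compositions $u_1 + \dots + u_r = m'-n$ then gives the stated formula.

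The step I expect to be the main obstacle is verifying that this parametrization is genuinely injective, so that the double sum counts each $v$ once and only once. For this I would argue that both the composition and the individual blocks are recoverable from $v$: the anchor letters bounding the regions are precisely the letters of $w$ that the construction never removes, so (using Theorem~\ref{ext let} to identify them) they persist in $v$ and cut it canonically into the same regions, after which reading off the number of minority letters in each region recovers $(u_1,\dots,u_r)$ and each $Z_j$. A secondary point requiring care is the degenerate case flagged in item~(4) of Proposition~\ref{prim compl}: when $\mathrm{Llet}_w(\psi_p^j[w],|A_{r-j+1}|) > \mathrm{Rlet}_w(\phi(B_{\mathrm{L}_p^{r-j+1}(w)}),|A_{r-j+1}|)$ the corresponding region is closed and forces $u_j = 0$, and I would check that the only surviving term there is $\binom{a_j}{0}=1$, so closed regions contribute trivially and the summation effectively ranges over admissible compositions only. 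Once injectivity and this degenerate bookkeeping are settled, combining the per-region binomial counts with the sum over compositions delivers $PC(p,w,m')$.
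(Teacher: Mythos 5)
The paper states this proposition without any proof, treating it as an immediate counting corollary of Proposition \ref{prim compl}; your argument --- for each fixed composition $u_1+\dots+u_r=m'-n$, count the blocks $Z_j$ as multiset permutations (a block built from $a_j$ copies of one letter and $u_j$ of the other admits $\binom{a_j+u_j}{u_j}$ arrangements), multiply over the disjoint replacement regions, and sum over compositions --- is exactly the intended one, and the two points you single out (injectivity of the parametrization via recoverability of the anchor letters, and the closed regions forcing $u_j=0$ so that they contribute a factor of $1$) are precisely the parts that need to be made explicit. One item your promised bookkeeping will surface: as written, both Proposition \ref{prim compl} and the displayed formula attach the same variable $u_r$ to two distinct regions (the internal region $j=r$ inside the product and the head block $Z_r$), giving $r+1$ binomial factors for $r$ parameters, so you will need either to verify that for a primitive $w$ one of those two regions is always closed or to repair the indexing; this is a defect inherited from the paper's statement rather than a flaw in your method.
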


Let $\mathrm{prim}_{n,p}(k)$ denote the set of all primitive words $w$ with $|w| \leq n$ and $c_p(w) = k$.

For arbitrary $n$ and $k$, we have 
\begin{equation}
B_{n,p}(k) = \sum_{w \in \mathrm{prim}_{n,p}(k)}PC(p,w,n).
\end{equation}
A brute-force algorithm to find the elements of $\mathrm{prim}_{n,p}(k)$ would be to use the procedure in Theorem \ref{algo prim} along with the formula in Theorem \ref{count form} to check whether $c_p(w) = k$.

Alternatively, notice that the solutions of the equation 
\begin{multline}
\label{prim eqn}
c_p(w) = \sum_{\substack{
q_1=|A_r|, |A_r|+1,\dots ,\sigma(w;\mathrm{L}_p^r(w),\zeta_p^w)\\
q_2=|A_{r-1}|,|A_{r-1}|+1,\ldots,\sigma(w;\mathrm{L}_p^{r-1}(w),t(q_1))\\
q_3=|A_{r-2}|,|A_{r-2}|+1,\ldots,\sigma(w;\mathrm{L}_p^{r-2}(w),t(q_2))\\
\vdots\\
q_r=|A_1|,|A_1|+1,\ldots,\sigma(w;\mathrm{L}_p^1(w),t(q_{r-1}))
}} {q_1-1 \choose |A_r|-1}{q_2-1 \choose |A_{r-1}|-1}\ldots{q_r-1 \choose |A_1|-1} = k \\
\end{multline}
are exactly the primitive words with $k$ occurrences; extraneous letters are not counted by the formula. One could try solving (\ref{prim eqn}) to find the elements of $\mathrm{prim}_{n,p}(k)$, in which case we take $\alpha_p^w = 1$ and $\zeta_p^w = s$.

\section{Future work}
Certainly, finding a way to compute $B_{n,p}(k)$ exactly for general $k$ is highly desired, if possible. Also, one can try to generalize the counting formula in Theorem \ref{count form} to words on a general alphabet $\{0,1,2,\ldots,l\}$ using similar concepts.

\end{document}